 \theoremstyle{plain}
  \newtheorem{theorem}{Theorem}[section]
  \newtheorem{corollary}[theorem]{Corollary}
  \newtheorem{proposition}[theorem]{Proposition}
  \newtheorem{lemma}[theorem]{Lemma}  
  \newtheorem{remark}[theorem]{Remark}
  \newtheorem{example}[theorem]{Example}
\title{On strong stationary times and approximation of Markov chain hitting times by geometric sums}
\author{Fraser Daly\footnote{Department of Actuarial Mathematics and Statistics and the Maxwell Institute for Mathematical Sciences, Heriot-Watt University, Edinburgh, EH14 4AS UK. Email: \texttt{f.daly@hw.ac.uk}}}
\date{\today}
\begin{document}

\maketitle

\begin{abstract}
Consider a discrete time, ergodic Markov chain with finite state space which is started from stationarity. Fill and Lyzinski (2014) showed that, in some cases, the hitting time for a given state may be represented as a sum of a geometric number of IID random variables. We extend this result by giving explicit bounds on the distance between any such hitting time and an appropriately chosen geometric sum, along with other related approximations. The compounding random variable in our approximating geometric sum is a strong stationary time for the underlying Markov chain; we also discuss the approximation and construction of this distribution.
\end{abstract}
\vspace{12pt}

\noindent{\bf Key words and phrases:} Markov chains; passage time; geometric sum; strong stationary time

\vspace{12pt}

\noindent{\bf AMS 2010 subject classification:} 60J10 (Primary); 60E15, 60F05, 62E17 (Secondaries)

\section{Preliminaries} \label{sec:intro}

Throughout, we let $X=\{X_t:t\geq0\}$ be an irreducible, ergodic, discrete time Markov chain with finite state space $S$, transition matrix $P$, and stationary distribution $\pi$.  Our primary interest in this note is in the time it takes $X$, when initialised in the stationary regime, to hit a given state $j\in S$.  Many structural results are known about such hitting times.  In particular, it is known that, under certain conditions, this hitting time may be expressed as a geometric sum of independent and identically distributed (IID) random variables; see Theorem 4.2 of Fill and Lyzinski \cite{fl14} for a precise statement.  Our main goal in this note is to consider the approximation of such a hitting time by a geometric sum.  Motivated by the results of \cite{fl14}, an appropriate choice for the compounding distribution in the approximating geometric sum is that of a strong stationary time for the underlying Markov chain $X$ with an appropriate initial distribution.  Recall that a strong stationary time, $T$, for $X$ is a randomized stopping time such that $X_T\sim\pi$, and $X_T$ is independent of $T$.  We refer the reader to \cite{ad87} and \cite{df90} for background on strong stationary times, and to \cite{f09}, \cite{ls12} and references therein for more recent developments.   

We use the remainder of this section to introduce some necessary background material on strong stationary times, and to provide an explicit link between geometric sums of strong stationary times and Markov chain hitting times.  Section \ref{sec:approx} then contains our main approximation results, and a comparison of the present results with those of Daly \cite{d10}, who has previously considered the approximation of Markov chain hitting times by geometric sums.  Note, however, that the approximating geometric sum chosen in \cite{d10} is very different to that used here, and that the results of \cite{d10} have some deficiencies which the present results remedy (for example, the bounds in \cite{d10} offer no guarantee of sharpness).  We also note that the approximating geometric sum we choose here stochastically dominates our hitting time, and explore implications of this in Section \ref{sec:approx}.  Finally, in Section \ref{sec:greedy} we conclude with some remarks about the construction of the strong stationary times needed for our approximating geometric sum.   

Throughout this note we let the $t$-step transition probabilities of our Markov chain $X$ be denoted by $P^t(k,l)=\mathbb{P}(X_t=l|X_0=k)$ for $k,l\in S$.  For a state $j\in S$, we define the distribution $\pi^{(j)}$ as the stationary distribution $\pi$ restricted to the states other than $j$: $\pi^{(j)}=\frac{\pi-\pi_j\delta_j}{1-\pi_j}$, where $\delta_j$ is the Dirac delta. We will write $N\sim\mbox{Geom}(p)$ if $\mathbb{P}(N=k)=p(1-p)^k$ for $k=0,1,2,\ldots$, and let `$\stackrel{d}{=}$' denote equality in distribution

A strong stationary time $T$ will be called `fastest' if it is stochastically smaller than any other strong stationary time.  Note that Proposition 3.2 of Aldous and Diaconis \cite{ad87} guarantees the existence of such a fastest strong stationary time for any Markov chain $X$ of the type we consider here.

The tails of strong stationary times are closely related to separation, which may be used to estimate how far $X_t$ is from stationarity.  The separation at time $t$ is defined by 
\[
s(t)=1-\inf_{s\in S}\left\{\frac{\mathbb{P}(X_t=s)}{\pi_s}\right\}\,.
\]
As with strong stationary times, for convenience we suppress dependence on the initial distribution of $X$ in the notation. 

The following lemma (see Proposition 3.2 of \cite{ad87}) says that tails of a fastest strong stationary time achieve separation.  
\begin{lemma}[\cite{ad87}] \label{lem:sep}
There exists a strong stationary time $T$ for $X$ such that $\mathbb{P}(T>t)=s(t)$ for all $t\geq0$. 
\end{lemma}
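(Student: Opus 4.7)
The plan is to construct $T$ as a randomized stopping rule tailored to hit the separation tail exactly. Write $f_t(s) = \mathbb{P}(X_t = s)$ and set
$$\alpha_t(s) = f_t(s) - (1 - s(t))\pi_s,$$
which is nonnegative by the definition of $s(t)$. The goal is to produce $T$ satisfying $\mathbb{P}(T > t, X_t = s) = \alpha_t(s)$ for every $t \geq 0$ and $s \in S$; summing over $s$ then gives $\mathbb{P}(T > t) = s(t)$ as required.

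I would build $T$ inductively. At each time $t$, conditional on $\{T > t-1, X_t = s\}$, the rule declares $T = t$ with some probability $\lambda_t(s)$, chosen so that the invariant above is preserved. Using the strong Markov property for the stopping rule together with the induction hypothesis and stationarity of $\pi$, one computes
$$\mathbb{P}(T > t-1, X_t = s) = \sum_u \alpha_{t-1}(u) P(u,s) = f_t(s) - (1 - s(t-1))\pi_s.$$
Hence the required stop probability
$$\lambda_t(s) = 1 - \frac{\alpha_t(s)}{f_t(s) - (1 - s(t-1))\pi_s}$$
lies in $[0,1]$ precisely when $s(t) \leq s(t-1)$ (the base case $t=0$ is similar and reduces to the definition of $s(0)$).

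The main obstacle is therefore verifying monotonicity of separation. I would prove this by observing that
$$\frac{f_{t+1}(s)}{\pi_s} = \sum_u \frac{\pi_u P(u,s)}{\pi_s}\cdot\frac{f_t(u)}{\pi_u}$$
is a convex combination, since the weights $\pi_u P(u,s)/\pi_s$ are nonnegative and sum to $1$ by stationarity of $\pi$. Taking the minimum over $s$ yields $\min_s f_{t+1}(s)/\pi_s \geq \min_u f_t(u)/\pi_u$, which is the desired monotonicity.

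Finally, I would verify the strong stationarity property. The construction gives $\mathbb{P}(X_t = s, T \leq t) = f_t(s) - \alpha_t(s) = (1-s(t))\pi_s$, and a parallel computation (using stationarity of $\pi$) yields $\mathbb{P}(X_t = s, T \leq t-1) = (1-s(t-1))\pi_s$. Subtracting gives $\mathbb{P}(X_T = s, T = t) = \mathbb{P}(X_t = s, T = t) = [s(t-1) - s(t)]\pi_s = \pi_s\,\mathbb{P}(T=t)$, so $X_T$ has distribution $\pi$ and is independent of $T$.
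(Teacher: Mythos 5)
Your proof is correct, and it reconstructs the standard construction from Aldous and Diaconis \cite{ad87}, which the paper cites without reproducing: the key invariant $\mathbb{P}(T>t,X_t=s)=\mathbb{P}(X_t=s)-(1-s(t))\pi_s$ is maintained inductively by a state-dependent randomized stopping rule, the admissibility of the stop probabilities $\lambda_t(s)\in[0,1]$ reduces to monotonicity of separation, and that monotonicity follows from your clean convex-combination observation using $\sum_u\pi_uP(u,s)=\pi_s$. The final factorization $\mathbb{P}(X_T=s,T=t)=\pi_s\,\mathbb{P}(T=t)$ then delivers both $X_T\sim\pi$ and independence, so nothing is missing.
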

We now give a lemma which collects facts about strong stationary times from the literature which we will need in what follows.  Although these facts are known, we give a short proof for the benefit of the reader, and to motivate the work that follows (in particular, the proof of Theorem \ref{thm:GeomSum}).   
\begin{lemma} \label{lem:beforeT}
Let $j\in S$ be such that $\pi_y\mathbb{P}(X_t=j)\leq\pi_j\mathbb{P}(X_t=y)$, for all $t\geq0$ and $y\in S$, when $X_0\sim\pi^{(j)}$.  Then $P^t(j,j)$ is decreasing in $t$ and, letting the random variable $T^{(j)}$ be defined by
\[
\mathbb{P}(T^{(j)}>t)=\frac{P^t(j,j)-\pi_j}{1-\pi_j}\,,\qquad t=0,1,\ldots\,,
\]
$T^{(j)}$ has the distribution of a fastest strong stationary time for $X$ with initial distribution $X_0\sim\pi^{(j)}$.  Furthermore, $\mathbb{P}(T^{(j)}>t,X_t=j)=0$ for all $t\geq0$.
\end{lemma}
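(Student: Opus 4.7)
The plan is to compute the separation $s(t)$ explicitly under the hypothesis, match it to the claimed tail formula, and then invoke Lemma \ref{lem:sep} to produce $T^{(j)}$.

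First I would compute $\mathbb{P}(X_t=j)$ when $X_0\sim\pi^{(j)}$. Writing $\pi^{(j)}=(\pi-\pi_j\delta_j)/(1-\pi_j)$ and using stationarity of $\pi$ under $P^t$,
\[
\mathbb{P}(X_t=j)=\frac{1}{1-\pi_j}\sum_{k\in S}\pi_k P^t(k,j)-\frac{\pi_j}{1-\pi_j}P^t(j,j)=\frac{\pi_j(1-P^t(j,j))}{1-\pi_j}.
\]
Hence $\mathbb{P}(X_t=j)/\pi_j=(1-P^t(j,j))/(1-\pi_j)$. The hypothesis $\pi_y\mathbb{P}(X_t=j)\leq\pi_j\mathbb{P}(X_t=y)$ says that the infimum in the definition of $s(t)$ is attained at $y=j$, so
\[
s(t)=1-\frac{\mathbb{P}(X_t=j)}{\pi_j}=1-\frac{1-P^t(j,j)}{1-\pi_j}=\frac{P^t(j,j)-\pi_j}{1-\pi_j}.
\]

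Next I would invoke Lemma \ref{lem:sep} applied to the chain started from $\pi^{(j)}$: there exists a strong stationary time, which I identify with $T^{(j)}$, whose tail is exactly $s(t)$, giving the claimed distribution. Since $T^{(j)}$ is a (finite) random variable, $\mathbb{P}(T^{(j)}>t)$ is decreasing in $t$, and by the identity just derived this forces $P^t(j,j)$ to be decreasing in $t$. Aldous--Diaconis's Proposition~3.2 guarantees that any strong stationary time whose tail agrees with $s(t)$ is fastest, so $T^{(j)}$ is fastest.

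For the final identity, I would condition on $\{T^{(j)}\leq t\}$: by the defining property of a strong stationary time, $X_t\mid\{T^{(j)}\leq t\}\sim\pi$ and is independent of $T^{(j)}$, giving
\[
\mathbb{P}(X_t=j)=\pi_j\mathbb{P}(T^{(j)}\leq t)+\mathbb{P}(X_t=j,T^{(j)}>t).
\]
Substituting the expressions already derived for $\mathbb{P}(X_t=j)$ and $\mathbb{P}(T^{(j)}\leq t)=1-s(t)=(1-P^t(j,j))/(1-\pi_j)\cdot(\text{rearranged})$, the two terms on the right cancel to leave $\mathbb{P}(X_t=j,T^{(j)}>t)=0$.

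The only real subtlety is the very first step: one has to recognise that the hypothesis of the lemma is precisely what is needed to pin down the minimiser in the separation infimum, after which everything else is essentially algebra together with Lemma \ref{lem:sep}. The monotonicity of $P^t(j,j)$, which at first looks like the most substantive claim, in fact comes out essentially for free once the tail identification is in place.
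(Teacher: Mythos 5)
Your proof is correct and follows essentially the same route as the paper: compute the separation from $\pi^{(j)}$, use the hypothesis to identify the minimiser with $y=j$, invoke Lemma \ref{lem:sep} to produce $T^{(j)}$, and finish with the same algebraic cancellation for $\mathbb{P}(T^{(j)}>t,X_t=j)=0$. Two cosmetic differences: the paper derives monotonicity of $P^t(j,j)$ by citing the known fact that $s(t)$ is nonincreasing, whereas you obtain it as a byproduct of $s(t)$ being a survival function; and the paper justifies $\mathbb{P}(T^{(j)}\leq t, X_t=j)=\mathbb{P}(T^{(j)}\leq t)\pi_j$ by citing Lemma~6.9 of Levin--Peres--Wilmer, while you label it ``the defining property'' --- strictly it is a standard consequence of the definition, not the definition itself, so a citation there would be cleaner.
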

\begin{proof}
Under the given condition on state $j$, when $X_0\sim\pi^{(j)}$,
\[
s(t)=1-\frac{\mathbb{P}(X_t=j)}{\pi_j}=1-\frac{\sum_{s\not=j}P^t(s,j)\pi_s}{\pi_j(1-\pi_j)}
=1-\frac{\pi_j-P^t(j,j)\pi_j}{\pi_j(1-\pi_j)}=\frac{P^t(j,j)-\pi_j}{1-\pi_j}\,.
\]
Since $s(t)$ is known to be decreasing in $t$ (see, for example, Chapter 9 of \cite{af}), we have that $P^t(j,j)$ is decreasing.  By Lemma \ref{lem:sep}, $T^{(j)}$ as defined has the distribution of a fastest strong stationary time.
Finally,
\begin{multline*}
\mathbb{P}(T^{(j)}>t,X_t=j) = \mathbb{P}(X_t=j)-\mathbb{P}(T^{(j)}\leq t,X_t=j)
= \mathbb{P}(X_t=j)-\mathbb{P}(T^{(j)}\leq t)\pi_j\\
= \pi_j\left(1-s(t)-\mathbb{P}(T^{(j)}\leq t)\right)
= 0\,,
\end{multline*}
where the second equality follows from Lemma 6.9 of \cite{lpw09}, the third by the assumption made on state $j$, and the final equality follows from Lemma \ref{lem:sep}.
\end{proof}
Using Lemma \ref{lem:beforeT}, Theorem \ref{thm:GeomSum} below gives an explicit link between strong stationary times and geometric sums.  Its conclusion is the same as that of Theorem 4.2 of Fill and Lyzinski \cite{fl14}, though the stated conditions are somewhat stronger (see Remark \ref{rem:FL} below), and the proof uses different techniques.  We give it here to motivate the main results of this paper that will follow in Section \ref{sec:approx}.
\begin{theorem} \label{thm:GeomSum}
Let $W_j=\inf\{t\geq0 : X_t=j\}$, the first time the Markov chain $X$ reaches the state $j$, where we assume the initial distribution $X_0\sim\pi$.  Assume that the state $j$ satisfies $\pi_y\mathbb{P}(X_t=j)\leq\pi_j\mathbb{P}(X_t=y)$, for all $t\geq0$ and $y\in S$, when $X_0\sim\pi^{(j)}$.  Let the random variable $T^{(j)}$ be defined as in Lemma \ref{lem:beforeT}.
Then $W_j\stackrel{d}{=}T^{(j)}_1+\cdots+T^{(j)}_N$, where $N\sim\mbox{Geom}(\pi_j)$ and $T^{(j)},T^{(j)}_1,T^{(j)}_2,\ldots$ are IID.
\end{theorem}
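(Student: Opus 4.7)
My plan is to condition on $X_0$ and then iterate the one-step analysis afforded by Lemma \ref{lem:beforeT} together with the strong Markov property. Since $X_0\sim\pi$, with probability $\pi_j$ we have $X_0=j$ (so $W_j=0$), while with probability $1-\pi_j$ we have $X_0\sim\pi^{(j)}$. The first event matches the atom at $k=0$ of $N\sim\mathrm{Geom}(\pi_j)$, since the empty sum is zero and $\mathbb{P}(N=0)=\pi_j$. It therefore suffices to show that, conditional on $X_0\sim\pi^{(j)}$, $W_j$ is distributed as $\sum_{i=1}^G T^{(j)}_i$, where $G$ is shifted geometric on $\{1,2,\ldots\}$ with parameter $\pi_j$, independent of an IID sequence of copies of $T^{(j)}$.

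For the main step I would construct the strong stationary times inductively along the trajectory. Let $T^{(j)}_1$ be the fastest strong stationary time provided by Lemma \ref{lem:beforeT} when $X_0\sim\pi^{(j)}$. The final assertion of that lemma, $\mathbb{P}(T^{(j)}_1>t,X_t=j)=0$ for every $t\geq 0$, forces $X_t\neq j$ for all $t<T^{(j)}_1$, so $W_j\geq T^{(j)}_1$. The strong stationary time property gives $X_{T^{(j)}_1}\sim\pi$ independently of $T^{(j)}_1$, so with probability $\pi_j$ we have $X_{T^{(j)}_1}=j$ and hence $W_j=T^{(j)}_1$, while with probability $1-\pi_j$ we have $X_{T^{(j)}_1}\sim\pi^{(j)}$. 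In the latter case, applying the strong Markov property at the (randomised) stopping time $T^{(j)}_1$, the shifted chain $(X_{T^{(j)}_1+t})_{t\geq 0}$ is distributed as $X$ started from $\pi^{(j)}$ and is conditionally independent of the past.

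Iterating this construction produces an IID sequence $T^{(j)}_1,T^{(j)}_2,\ldots$ together with an independent $\mathrm{Bernoulli}(\pi_j)$ sequence $B_1,B_2,\ldots$, in which $B_k$ records whether the chain is at $j$ at time $T^{(j)}_1+\cdots+T^{(j)}_k$; the two sequences are mutually independent by the strong stationary time property. Setting $G=\min\{k\geq 1:B_k=1\}$ then yields a shifted geometric $G$ with parameter $\pi_j$, independent of the $T^{(j)}_i$, and the construction gives $W_j=\sum_{i=1}^G T^{(j)}_i$ on the event $X_0\neq j$. Combining with the $X_0=j$ case yields $W_j\stackrel{d}{=}\sum_{i=1}^N T^{(j)}_i$ with $N\sim\mathrm{Geom}(\pi_j)$.

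The main obstacle is making the iterative step fully rigorous: one must verify that, after conditioning on $X_{T^{(j)}_1}\neq j$, the post-$T^{(j)}_1$ chain truly restarts from $\pi^{(j)}$ and that its own fastest strong stationary time is an independent copy of $T^{(j)}_1$. This amounts to combining the strong Markov property at the randomised stopping time $T^{(j)}_1$ with the independence of $T^{(j)}_1$ from $X_{T^{(j)}_1}$, and then assembling the resulting $T^{(j)}_i$ and $B_i$ into a single jointly independent family. Once this bookkeeping is carried out, the geometric-sum representation is immediate.
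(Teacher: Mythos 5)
Your proof is correct and rests on exactly the same key observation as the paper's: Lemma \ref{lem:beforeT} guarantees the chain (started from $\pi^{(j)}$) cannot visit $j$ strictly before a fastest strong stationary time $T^{(j)}$, and at time $T^{(j)}$ stationarity is attained with $X_{T^{(j)}}$ independent of $T^{(j)}$, giving a success probability $\pi_j$ and, on failure, a restart from $\pi^{(j)}$. The difference is one of packaging. The paper performs this decomposition exactly once and then cites a characterization of compound geometric laws from Section~2 of \cite{d10}: a nonnegative integer-valued $W$ with $\mathbb{P}(W=0)=p$ is a geometric sum of $T$ if and only if $W+T\stackrel{d}{=}(W\mid W>0)$ with $T$ independent of $W$. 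You instead unfold this fixed-point relation into an explicit iteration, directly constructing the IID sequence $T^{(j)}_1,T^{(j)}_2,\ldots$ and the independent Bernoulli$(\pi_j)$ sequence $B_1,B_2,\ldots$, and taking $N$ (or rather your shifted $G$) to be the first success index. Your route is more self-contained, since it does not need to import the characterization lemma, but it shifts the burden onto the probabilistic bookkeeping you flag at the end: verifying the strong Markov property at the randomized stopping time $T^{(j)}_1$ (including handling the auxiliary randomization), and assembling the post-$T^{(j)}_1$ strong stationary time and success indicator into a jointly independent family with the earlier ones. That bookkeeping is standard but nontrivial, and it is essentially what the cited characterization in \cite{d10} is doing for the paper in one stroke. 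One small point worth tightening: from $\mathbb{P}(T^{(j)}_1>t,\,X_t=j)=0$ for each fixed $t$ you should take a union over $t\in\{0,1,2,\ldots\}$ to conclude that almost surely $X_t\neq j$ for every $t<T^{(j)}_1$; as stated, the lemma gives a null event for each $t$ separately, and the countable union step is what yields $W_j\geq T^{(j)}_1$ almost surely.
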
  
\begin{proof}
Using the characterization of geometric sums presented in Section 2 of Daly \cite{d10}, and the fact that $\mathbb{P}(W_j=0)=\pi_j$, the conclusion follows on showing that $W_j+T^{(j)}\stackrel{d}{=}(W_j|W_j>0)$.  

From Lemma \ref{lem:beforeT}, $T^{(j)}$ has the distribution of a fastest strong stationary time for $X$ with initial distribution $X_0\sim\pi^{(j)}$.  Note that we may construct $(W_j|W_j>0)$ as the first hitting time of the state $j$ in a copy of the Markov chain $X$ started according to this initial distribution.  Also, by Lemma \ref{lem:beforeT}, the probability that this Markov chain visits state $j$ before time $T^{(j)}$ is zero, at which point we have achieved stationarity, and the subsequent time needed before it visits state $j$ has the same distribution as the original hitting time $W_j$ from stationarity.
\end{proof}
\begin{remark} \label{rem:FL}
\emph{Theorem 4.2 of Fill and Lyzinski \cite{fl14} also gives conditions under which the hitting time $W_j$ is distributed as a geometric sum.  Their result is stronger, as they work under the assumption that $P^t(j,j)$ is decreasing in $t$ in the place of our assumption that $\pi_y\mathbb{P}(X_t=j)\leq\pi_j\mathbb{P}(X_t=y)$ for all $t$ and $y$.  Lemma \ref{lem:beforeT} shows that our assumption is more restrictive than that of Fill and Lyzinski.}
\end{remark}

One of our aims in this note is to find an approximate version of Theorem \ref{thm:GeomSum}, in the sense that we wish to quantify explicitly the departure of the distribution of $W_j$ from that of a geometric sum, as well as deriving other bounds related to this approximation.  We will do this in Section \ref{sec:approx} below.  

\section{Approximation of Markov chain hitting times} \label{sec:approx}

Let $W_j=\inf\{t\geq0 : X_t=j\}$ be the Markov chain hitting time defined in Theorem \ref{thm:GeomSum}, and let $T^{(j)}$ be a strong stationary time for the Markov chain $X$ with initial distribution $X_0\sim\pi^{(j)}$.  Our main gaol in this section is to consider the approximation of $W_j$ by the geometric sum $U=T^{(j)}_1+\cdots+T^{(j)}_N$, where $N\sim\mbox{Geom}(\pi_j)$ and $T^{(j)},T^{(j)}_1,T^{(j)}_2,\ldots$ are IID.  We firstly do this by bounding the total variation distance $d_{TV}(\mathcal{L}(W_j),\mathcal{L}(U))$, defined by
\[
d_{TV}(\mathcal{L}(W_j),\mathcal{L}(U))=\sup_{A\subseteq\mathbb{Z}^+}|\mathbb{P}(W_j\in A)-\mathbb{P}(U\in A)|=\inf_{(W_j,U)}\mathbb{P}(W_j\not=U)\,,
\]
where the infimum is taken over all couplings of $W_j$ and $U$. 
Following the work of Daly \cite{d10}, to derive such a bound we construct an integer-valued random variable $Y$ (which may depend on $T^{(j)}$) such that $Y+T^{(j)}\geq0$ and
\[
Y+T^{(j)}\stackrel{d}{=}(W_j|W_j>0)\,.
\]   
Proposition 3.2 of \cite{d10} then gives us that
\[
d_{TV}(\mathcal{L}(W_j),\mathcal{L}(U))\leq\frac{1-\pi_j}{\pi_j}d_{TV}(\mathcal{L}(W_j),\mathcal{L}(Y))\,.
\]

\begin{remark} \label{rem:bound}
\emph{The results of \cite{d10} are stated under the assumption that the support of $T^{(j)}$ is bounded.  This assumption may be removed using by making suitable adjustments to the proof of Theorem 2.1 of that paper.  Specifically, we need analogues of Lemmas 4.1 and 4.2 of \cite{d10} which may be applied when the functions involved are not polynomials of bounded degree.  We note that a suitable analogue of Lemma 4.1 follows from Rouch\'e's theorem, and that Lemma 4.2 may be generalised by applying the residue theorem and a suitable change of variable.  We note that this allows us to remove the assumption that the compounding random variable has bounded support from all the results of \cite{d10}, including the approximation results for Markov chain hitting times.}
\end{remark}

Write $\widetilde{W_j}=(W_j|W_j>0)$.  As in Section \ref{sec:intro}, we may construct $\widetilde{W_j}$ as the time of the first visit of the Markov chain $X$ to the state $j$ when initialized with distribution $X_0\sim\pi^{(j)}$.  This hitting time may come either before the strong stationary time $T^{(j)}$ for this chain, or not.  On the event that $\widetilde{W_j}<T^{(j)}$, we set $Y=\widetilde{W_j}-T^{(j)}$; otherwise we have achieved stationarity at time $T^{(j)}$, and the remaining time until we reach state $j$ is distributed as $W_j$, and we may set $Y=W_j$.  Here we construct $W_j$ using a Bernoulli random variable $\xi\sim\mbox{Be}(\pi_j)$ (independent of all else) with $\mathbb{P}(\xi=1)=1-\mathbb{P}(\xi=0)=\pi_j$, and set $W_j=0$ if $\xi=1$; otherwise we set $W_j=\widetilde{W_j}$.

We therefore have that
\[
d_{TV}(\mathcal{L}(W_j),\mathcal{L}(Y))\leq\mathbb{P}(\widetilde{W_j}<T^{(j)})\leq\sum_{t=1}^\infty\mathbb{P}(X_t=j,T^{(j)}>t)\,.
\]
Evaluating this bound, (using the fact that $T^{(j)}$ is a strong stationary time) we have 
\begin{align*}
d_{TV}(\mathcal{L}(W_j),\mathcal{L}(Y))&\leq\sum_{t=1}^\infty\left[\mathbb{P}(X_t=j)-\mathbb{P}(T^{(j)}\leq t,X_t=j)\right]\\
&=\pi_j\sum_{t=1}^\infty\left[\frac{1-P^t(j,j)}{1-\pi_j}-\mathbb{P}(T^{(j)}\leq t)\right]\\
&=\pi_j\sum_{t=1}^\infty\left[\mathbb{P}(T^{(j)}>t)-\frac{P^t(j,j)-\pi_j}{1-\pi_j}\right]\\
&=\pi_j\mathbb{E}T^{(j)}-\frac{\pi_j^2}{1-\pi_j}\mathbb{E}W_j\,,
\end{align*}
where the final equality follows from the identity $\pi_j\mathbb{E}W_j=\sum_{t=0}^\infty\left[P^t(j,j)-\pi_j\right]$ given in Proposition 10.19 of \cite{lpw09}. We then arrive at the following.
\begin{theorem} \label{thm:TVbd}
Let $W_j=\inf\{t\geq0 : X_t=j\}$, where we assume that $X_0\sim\pi$.  Let $T^{(j)}$ be a strong stationary time (independent of $W_j$) for the Markov chain $X$ with initial distribution $X_0\sim\pi^{(j)}$.  Let $U=T^{(j)}_1+\cdots+T^{(j)}_N$, where $N\sim\mbox{Geom}(\pi_j)$ and $T^{(j)},T^{(j)}_1,T^{(j)}_2,\ldots$ are IID.  Then
\begin{equation} \label{eq:TVdb}
d_{TV}(\mathcal{L}(W_j),\mathcal{L}(U))\leq
(1-\pi_j)\mathbb{E}T^{(j)}-\pi_j\mathbb{E}W_j
\,.
\end{equation}
\end{theorem}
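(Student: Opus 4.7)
The plan is to assemble the pieces laid out in the paragraphs before the theorem statement. Proposition 3.2 of Daly \cite{d10} tells us that
$$d_{TV}(\mathcal{L}(W_j),\mathcal{L}(U))\leq\frac{1-\pi_j}{\pi_j}\,d_{TV}(\mathcal{L}(W_j),\mathcal{L}(Y))$$
for any integer-valued $Y$ satisfying $Y+T^{(j)}\geq 0$ and $Y+T^{(j)}\stackrel{d}{=}\widetilde{W_j}$, so I would start by recording the coupling already sketched: realise $\widetilde{W_j}$ as the first visit to $j$ of a copy of $X$ started in $\pi^{(j)}$, coupled to $T^{(j)}$ on the same probability space, and take $Y=\widetilde{W_j}-T^{(j)}$ on $\{\widetilde{W_j}<T^{(j)}\}$ and $Y=W_j$ on the complement (using the defining property of a strong stationary time to see that on this second event the residual time $\widetilde{W_j}-T^{(j)}$ has the law of $W_j$). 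The coupling inequality then gives $d_{TV}(\mathcal{L}(W_j),\mathcal{L}(Y))\leq\mathbb{P}(\widetilde{W_j}<T^{(j)})$.

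Next, a union bound over the value of $\widetilde{W_j}$ yields $\mathbb{P}(\widetilde{W_j}<T^{(j)})\leq\sum_{t\geq1}\mathbb{P}(X_t=j,T^{(j)}>t)$, and I would expand each summand using two identities. Under $X_0\sim\pi^{(j)}$, writing $\pi^{(j)}=(\pi-\pi_j\delta_j)/(1-\pi_j)$ and using $\pi P^t=\pi$ gives $\mathbb{P}(X_t=j)=\pi_j(1-P^t(j,j))/(1-\pi_j)$; and the defining property of a strong stationary time ($X_{T^{(j)}}\sim\pi$, independent of $T^{(j)}$) gives $\mathbb{P}(X_t=j,T^{(j)}\leq t)=\pi_j\mathbb{P}(T^{(j)}\leq t)$. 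Substitution then brings the bound to $\pi_j\sum_{t\geq1}\bigl[\mathbb{P}(T^{(j)}>t)-(P^t(j,j)-\pi_j)/(1-\pi_j)\bigr]$.

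Finally I would evaluate the two tail sums separately. The first equals $\mathbb{E}T^{(j)}-1$ once one observes that $\mathbb{P}(T^{(j)}>0)=1$, which holds because $\pi^{(j)}$ places no mass at $j$ and so the chain cannot already be stationary at time $0$. The second, by the identity $\pi_j\mathbb{E}W_j=\sum_{t\geq0}[P^t(j,j)-\pi_j]$ of Proposition 10.19 in \cite{lpw09}, equals $\pi_j\mathbb{E}W_j/(1-\pi_j)-1$. The two $-1$ contributions cancel, leaving $\pi_j\mathbb{E}T^{(j)}-\pi_j^2\mathbb{E}W_j/(1-\pi_j)$; scaling by $(1-\pi_j)/\pi_j$ delivers \eqref{eq:TVdb}. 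The only subtlety worth flagging is that Proposition 3.2 of \cite{d10} is stated only for compounding variables with bounded support, but Remark \ref{rem:bound} has already addressed this, so the argument is otherwise entirely mechanical given the setup.
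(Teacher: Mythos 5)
Your proposal reproduces the paper's argument step for step: the reduction via Proposition~3.2 of Daly (2010), the construction of $Y$ from the coupling of $\widetilde{W_j}$ with $T^{(j)}$, the union bound $\mathbb{P}(\widetilde{W_j}<T^{(j)})\leq\sum_{t\geq1}\mathbb{P}(X_t=j,\,T^{(j)}>t)$, the two strong-stationary-time identities, and the tail-sum evaluation via Proposition~10.19 of Levin--Peres--Wilmer. The only thing you add beyond the paper's exposition is making explicit that $\mathbb{P}(T^{(j)}>0)=1$ (because $\pi^{(j)}$ puts no mass on $j$), which the paper uses implicitly; otherwise the two proofs are the same.
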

Note that Theorem \ref{thm:GeomSum} above follows as an immediate corollary of this result: if $j$ is such that $\pi_y\mathbb{P}(X_t=j)\leq\pi_j\mathbb{P}(X_t=y)$ for all $t$ and $y$, then Lemma \ref{lem:beforeT} gives us that we may take the strong stationary time $T^{(j)}$ to have distribution 
\[
\mathbb{P}(T^{(j)}>t)=\frac{P^t(j,j)-\pi_j}{1-\pi_j}\,,\qquad t=0,1,\ldots\,,
\]
and so the right-hand side of (\ref{eq:TVdb}) is zero (again, using Proposition 10.19 of \cite{lpw09}).

Daly \cite{d10} also treats the problem of approximation of the random variable $W_j$ by a geometric sum (with the same choice of geometric random variable $N$, but a different choice of compounding random variable).  Daly made his choice of compounding distribution to reflect (in a certain sense) the minimum time needed for the Markov chain to make a jump to the state $j$, while we have chosen $T^{(j)}$ to reflect the time needed to achieve stationarity.  This gives us a fundamentally different approximating geometric sum, and a result which allows us to derive bounds which reflect cases in which the hitting time $W_j$ is exactly distributed as a geometric sum (as in Theorem \ref{thm:GeomSum}).  There are no such known results corresponding to the approximation for Markov chain hitting times given in \cite{d10}.  Some illustration of this is provided by Example \ref{eg:2State} below.
\begin{example} \label{eg:2State}
\emph{Let $X$ have state space $\{0,1\}$ and transition matrix
\[
P=\begin{pmatrix}
1/2 & 1/2\\
1/2-\delta & 1/2+\delta
\end{pmatrix}\,,
\]
for some $0\leq\delta<1/2$, so that $\pi_0=\frac{1-2\delta}{2(1-\delta)}$, $\pi_1=\frac{1}{2(1-\delta)}$, and
\[
P^t=\frac{1}{2(1-\delta)}
\begin{pmatrix}
1-2\delta+\delta^t & 1-\delta^t\\
1-2\delta-\delta^t(1-2\delta) & 1+\delta^t(1-2\delta)
\end{pmatrix}\,.
\]
We let $W_1=\inf\{t\geq0 : X_t=1\}$.  Note that $P^t(1,1)$ is decreasing in $t$, so Theorem 4.2 of Fill and Lyzinski \cite{fl14} gives that $W_1\stackrel{d}{=}T^{(1)}_1+\cdots+T^{(1)}_N$, where $N\sim\mbox{Geom}(\pi_1)$, and $T^{(1)},T^{(1)}_1,T^{(1)}_2,\ldots$ are IID with $\mathbb{P}(T^{(1)}>t)=\delta^t$.  As one would expect, it is easily checked that the upper bound of Theorem \ref{thm:TVbd} is zero, also reflecting that fact that $W_1$ is distributed as a geometric sum.  However, Theorem 3.1 of Daly \cite{d10}, which also considers approximation of Markov chain hitting times by geometric sums, gives only the bound
\[
d_{TV}(\mathcal{L}(W_1),\mathcal{L}(N))\leq\frac{\delta(1-2\delta)}{2(1-\delta)^2}\,,
\]
i.e., the approximating geometric sum chosen by that result is the geometric random variable $N$, and that theorem does not reflect the fact that $W_1$ is itself distributed as a geometric sum.}
\end{example}

In Theorem \ref{thm:ord} below, we note that the approximating geometric sum $U$ is stochastically larger than the hitting time $W_j$.  We use `$\geq_{st}$' to denote the usual stochastic ordering, defined for random variables $Y$ and $Z$ by $Y\geq_{st}Z$ if $\mathbb{E}h(Y)\geq\mathbb{E}h(Z)$ for all increasing functions $h$.  Some consequences of this for bounding hitting times and strong stationary times will be considered following the proof of the theorem.
\begin{theorem} \label{thm:ord}
Let $W_j=\inf\{t\geq0 : X_t=j\}$, where we assume that $X_0\sim\pi$.  Let $T^{(j)}$ be a strong stationary time (independent of $W_j$) for the Markov chain $X$ with initial distribution $X_0\sim\pi^{(j)}$.  Let $U=T^{(j)}_1+\cdots+T^{(j)}_N$, where $N\sim\mbox{Geom}(\pi_j)$ and $T^{(j)},T^{(j)}_1,T^{(j)}_2,\ldots$ are IID.  Then $U\geq_{st}W_j$.
\end{theorem}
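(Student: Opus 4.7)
My plan is to prove $\mathbb{P}(W_j>t)\leq\mathbb{P}(U>t)$ for all $t\geq 0$ by induction on $t$, leveraging one key coupling inequality. The base case $t=0$ is immediate, since both probabilities equal $1-\pi_j$.

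The key lemma I would establish first is the stochastic inequality $\widetilde{W_j}\leq_{st}T^{(j)}+W_j'$, where $\widetilde{W_j}:=(W_j\mid W_j>0)$ is the hitting time of $j$ for the chain started from $\pi^{(j)}$, $W_j'\stackrel{d}{=}W_j$, and $W_j'$ is independent of $T^{(j)}$. This is proved by a direct coupling: start $X$ at $X_0\sim\pi^{(j)}$, run to $T^{(j)}$, and set $W':=\inf\{s\geq 0:X_{T^{(j)}+s}=j\}$. By the defining property of strong stationary times, $(X_{T^{(j)}+s})_{s\geq 0}$ is a Markov chain from $\pi$, independent of $T^{(j)}$, so $W'\stackrel{d}{=}W_j$ and $W'\perp T^{(j)}$. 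Since $T^{(j)}+W'$ is a specific hitting time of $j$ by the chain from $\pi^{(j)}$, it is pathwise at least $\widetilde{W_j}$, the first such hitting time.

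For the inductive step at $t\geq 1$, the first-step decompositions $\mathbb{P}(U>t)=(1-\pi_j)\mathbb{P}(T^{(j)}+U'>t)$ (using $U=0$ on $\{N=0\}$ and $U\stackrel{d}{=}T^{(j)}+U'$ on $\{N\geq 1\}$, with $U'\stackrel{d}{=}U$ independent of $T^{(j)}$) and $\mathbb{P}(W_j>t)=(1-\pi_j)\mathbb{P}(\widetilde{W_j}>t)$ reduce the claim to showing $\mathbb{P}(\widetilde{W_j}>t)\leq\mathbb{P}(T^{(j)}+U'>t)$. The lemma gives $\mathbb{P}(\widetilde{W_j}>t)\leq\mathbb{P}(T^{(j)}+W_j'>t)$; conditioning on $T^{(j)}=s$ writes the right-hand side as $\mathbb{P}(T^{(j)}>t)+\sum_{s=1}^{t}\mathbb{P}(T^{(j)}=s)\mathbb{P}(W_j>t-s)$, where the sum starts at $s=1$ because $\mathbb{P}(T^{(j)}=0)=0$ (since $X_0\sim\pi^{(j)}$ places no mass on $j$ whereas $\pi$ does, forcing $T^{(j)}\geq 1$ a.s.). For each $s\geq 1$ we have $t-s<t$, so the inductive hypothesis gives $\mathbb{P}(W_j>t-s)\leq\mathbb{P}(U>t-s)$ termwise, and the sum is bounded above by $\mathbb{P}(T^{(j)}+U'>t)$, closing the induction.

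The main subtlety, which I expect to be the crux, is the clean application of the strong stationary time property in the lemma: one needs the post-$T^{(j)}$ process $(X_{T^{(j)}+s})_{s\geq 0}$ not only to have $X_{T^{(j)}}\sim\pi$ independent of $T^{(j)}$, but to be a full Markov chain from $\pi$ independent of $T^{(j)}$. This stronger statement is a standard consequence of the Aldous--Diaconis definition of a randomized strong stationary time and is available in the literature already cited in Section~\ref{sec:intro}.
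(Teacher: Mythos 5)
Your proof is correct, but it takes a genuinely different route from the paper. The paper's proof leans on the Stein's-method machinery of Daly (2010): it invokes equation (3.3) of that paper, which expresses $\mathbb{E}h(W_j)-\mathbb{E}h(U)$ in terms of the solution $f_h$ of a compound-geometric Stein equation, observes that $f_h$ is decreasing whenever $h$ is increasing, and couples $Y\leq W_j$ almost surely (with $Y+T^{(j)}\stackrel{d}{=}\widetilde{W_j}$) to conclude. You instead prove the tail inequality $\mathbb{P}(W_j>t)\leq\mathbb{P}(U>t)$ directly by induction on $t$, using the first-step decomposition of the geometric sum and of $W_j$, together with the coupling lemma $\widetilde{W_j}\leq_{st}T^{(j)}+W_j'$ (with $W_j'\stackrel{d}{=}W_j$ independent of $T^{(j)}$). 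Both proofs hinge on the same underlying probabilistic fact --- that by the strong Markov property and the defining property of a strong stationary time, the post-$T^{(j)}$ trajectory is a fresh copy of the chain started from $\pi$, independent of $T^{(j)}$ --- but they package it differently. The paper's version is compact and fits naturally alongside Theorem~\ref{thm:TVbd}, at the cost of requiring the reader to have internalized the Stein framework of \cite{d10} (including Remark~\ref{rem:bound}, which extends that framework beyond bounded-support compounding variables). Your version is more elementary and self-contained: it avoids $f_h$ entirely, and the observation $\mathbb{P}(T^{(j)}=0)=0$ (needed both for the base case and to restrict the convolution sum to $s\geq1$) is exactly the right detail to make the induction close. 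A minor stylistic point: the paper establishes the pathwise coupling $Y\leq W_j$ and then reads off $\geq_{st}$ abstractly; you reconstruct the tail bound from the coupling directly, which is longer but arguably more transparent.
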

\begin{proof}
Following equation (3.3) of Daly \cite{d10}, we write 
\begin{equation} \label{eq:stein}
\mathbb{E}h(W_j)-\mathbb{E}h(U)=(1-\pi_j)\mathbb{E}\left[f_h(W_j+T^{(j)})-f_h(Y+T^{(j)})\right]\,,
\end{equation}
where $Y$ is the random variable constructed in the proof of Theorem \ref{thm:TVbd}, $f_h(0)=0$, and 
\begin{equation} \label{eq:stein_sol}
f_h(x)=-\mathbb{E}\left[\sum_{r=0}^\infty(1-\pi_j)^r\{h(V_r)-\mathbb{E}h(U)\}\Bigg|V_0=x\right]\,,
\end{equation}
for $x>0$,where $V_r=V_0+T^{(j)}_1+\cdots+T^{(j)}_r$.  From (\ref{eq:stein_sol}), we can check that if $h$ is increasing, then $f_h$ is decreasing.  Since we have constructed $Y$ in the proof of Theorem \ref{thm:TVbd} in such a way that $Y\leq W_j$ almost surely, using (\ref{eq:stein}) then gives us the desired inequality.
\end{proof}
\begin{corollary} \label{cor:ord}
Let $W_j$ and $T^{(j)}$ be as in Theorem \ref{thm:ord}.  Then
\begin{enumerate}
\item[(i)]
\[
\mathbb{E}W_j\leq\frac{1-\pi_j}{\pi_j}\mathbb{E}T^{(j)}\leq\frac{1}{\pi_j}\sum_{l\not=j}\pi_l\mathbb{E}T_{(l)}\,,
\]
where $T_{(l)}$ is a strong stationary time for $X$ with initial distribution $X_0\sim\delta_l$.
\item[(ii)] If $\theta>0$ is such that $(1-\pi_j)\mathbb{E}e^{\theta T^{(j)}}<1$, then
\[
\mathbb{E}e^{\theta W_j}\leq\frac{\pi_j}{1-(1-\pi_j)\mathbb{E}e^{\theta T^{(j)}}}\,.
\]
\end{enumerate}
\end{corollary}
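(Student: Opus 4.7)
The plan is to deduce both parts of the corollary directly from the stochastic ordering $U \geq_{st} W_j$ established in Theorem \ref{thm:ord}, applied to two different choices of increasing test function $h$. The MGF of the geometric sum $U$ is available in closed form, and its mean is given by Wald's identity, so both inequalities reduce to computing $\mathbb{E}h(U)$ and, for part (i), a second estimate comparing $\mathbb{E}T^{(j)}$ to a mixture of strong stationary times started from deterministic states.

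For part (i), I would first take $h(x)=x$ (which is increasing) in $U\geq_{st} W_j$ to obtain $\mathbb{E}W_j\leq \mathbb{E}U$, and then use the fact that for $N\sim\mbox{Geom}(\pi_j)$ independent of the IID copies $T^{(j)}_i$, Wald's identity gives $\mathbb{E}U=\mathbb{E}N\cdot\mathbb{E}T^{(j)}=\frac{1-\pi_j}{\pi_j}\mathbb{E}T^{(j)}$. This yields the first inequality. For the second inequality, I would exploit that $T^{(j)}$ is a \emph{fastest} strong stationary time with initial distribution $\pi^{(j)}$ (Lemma \ref{lem:beforeT}). Specifically, one can build a (generally non-fastest) strong stationary time $T'$ from $\pi^{(j)}$ as follows: sample $X_0\sim\pi^{(j)}$ to obtain some state $l\neq j$, and then run the strong stationary time $T_{(l)}$ from state $l$. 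Then $T'$ is a strong stationary time for $X$ with $X_0\sim\pi^{(j)}$, so by the fastest property $T^{(j)}\leq_{st} T'$, and in particular
\[
\mathbb{E}T^{(j)}\leq \mathbb{E}T'=\sum_{l\neq j}\pi^{(j)}(l)\,\mathbb{E}T_{(l)}=\frac{1}{1-\pi_j}\sum_{l\neq j}\pi_l\,\mathbb{E}T_{(l)}.
\]
Multiplying by $(1-\pi_j)/\pi_j$ gives the claimed chain of inequalities.

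For part (ii), I would apply Theorem \ref{thm:ord} with the increasing test function $h(x)=e^{\theta x}$ (for $\theta>0$) to get $\mathbb{E}e^{\theta W_j}\leq \mathbb{E}e^{\theta U}$. Conditioning on $N$ and using independence of the summands,
\[
\mathbb{E}e^{\theta U}=\sum_{n=0}^\infty \pi_j(1-\pi_j)^n\bigl(\mathbb{E}e^{\theta T^{(j)}}\bigr)^n=\frac{\pi_j}{1-(1-\pi_j)\mathbb{E}e^{\theta T^{(j)}}},
\]
where the geometric series converges precisely because of the hypothesis $(1-\pi_j)\mathbb{E}e^{\theta T^{(j)}}<1$. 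This establishes part (ii).

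The only step with any subtlety is the second inequality in part (i): one must verify that the concatenated object $T'$ really is a strong stationary time for $X$ started from $\pi^{(j)}$ (so that the fastest property of $T^{(j)}$ applies), which follows because at time $T_{(l)}$ the chain started from $l$ is in stationarity and independent of $T_{(l)}$, and this property is preserved under the mixture over $l\sim\pi^{(j)}$. Everything else is a direct application of $U\geq_{st} W_j$ together with standard geometric-sum identities.
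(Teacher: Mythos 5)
Your argument for the first inequality in (i) and for (ii) coincides with the paper's: apply $U\geq_{st}W_j$ with $h(x)=x$ and $h(x)=e^{\theta x}$, then evaluate the mean and MGF of the geometric sum. For the second inequality in (i) your route is genuinely different. The paper chooses $T^{(j)}$ to be the fastest strong stationary time and then, via Lemma \ref{lem:sep}, writes $\mathbb{P}(T^{(j)}>t)$ as the separation distance from $\pi^{(j)}$, splits the supremum over the convex combination into a sum of suprema, and recognises each term as a separation distance from $\delta_l$, giving $\mathbb{P}(T^{(j)}>t)\leq\frac{1}{1-\pi_j}\sum_{l\neq j}\pi_l\mathbb{P}(T_{(l)}>t)$ before summing over $t$. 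You instead construct an explicit strong stationary time $T'$ for $X$ started from $\pi^{(j)}$ as the mixture $T'=T_{(X_0)}$ and invoke minimality of the fastest $T^{(j)}$ to get $T^{(j)}\leq_{st}T'$; this is a cleaner, more probabilistic derivation of the same tail inequality, at the cost of having to verify the mixture is a strong stationary time (which you do). One inaccuracy: you cite Lemma \ref{lem:beforeT} to justify that $T^{(j)}$ is fastest, but that lemma requires the hypothesis $\pi_y\mathbb{P}(X_t=j)\leq\pi_j\mathbb{P}(X_t=y)$, which is not assumed here. The corollary's $T^{(j)}$ is an arbitrary strong stationary time from $\pi^{(j)}$, and the second inequality in (i) holds only after \emph{choosing} $T^{(j)}$ to be fastest, whose existence is guaranteed in general by Proposition 3.2 of Aldous and Diaconis (as noted before Lemma \ref{lem:sep}); with that substitution your argument is correct.
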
 
\begin{proof}
The first part of (i) and (ii) follow immediately from the standard choices $h(x)=x$ and $h(x)=e^{\theta x}$ (for $\theta>0$), respectively, in Theorem \ref{thm:ord}.  Note that these inequalities are sharp, given our previous comments about cases where $W_j$ is equal in distribution to a geometric sum.

For the second inequality in (i), we apply the first inequality and choose $T^{(j)}$ to be the fastest strong stationary time.  With this choice, using Lemma \ref{lem:sep},
\begin{multline*}
\mathbb{P}(T^{(j)}>t)=\sup_{i\in S}\left(1-\frac{\sum_{l\not=j}\pi_lP^t(l,i)}{\pi_i(1-\pi_j)}\right)=\frac{1}{1-\pi_j}\sup_{i\in S}\left(\sum_{l\not=j}\left[\pi_l-\frac{\pi_lP^t(l,i)}{\pi_i}\right]\right)\\
\leq\frac{1}{1-\pi_j}\sum_{l\not=j}\pi_l\sup_{i\in S}\left(1-\frac{P^t(l,i)}{\pi_i}\right)\leq\frac{1}{1-\pi_j}\sum_{l\not=j}\pi_l\mathbb{P}(T_{(l)}>t)\,.
\end{multline*}
The desired bound follows by summing each side of the above inequality.
\end{proof}
\begin{remark}
\emph{We could combine Corollary \ref{cor:ord} with a stochastic lower bound on $W_j$ to derive inequalities for the strong stationary time $T^{(j)}$ defined in Theorem \ref{thm:ord}.  For example, noting that (when constructing $W_j$ with $X_0\sim\pi$), from state $s\not=j$ it takes at least a geometrically distributed number of steps (with parameter $1-P(s,s)$) before the chain reaches state $j$, we have that $W_j$ is stochastically larger than $I(\xi=0)(1+\eta)$, where $\xi\sim\mbox{Be}(\pi_j)$ and $\eta|X_0\sim\mbox{Geom}(1-P(X_0,X_0))$ (with $X_0\sim\pi^{(j)}$) are independent of all else.  Since the expectation of this random variable is a lower bound for the expectation of $W_j$, we may combine this observation with Corollary \ref{cor:ord}(i) to obtain a lower bound for the expectation of the strong stationary time $T^{(j)}$ in that result:
\[
\mathbb{E}T^{(j)}\geq\pi_j+\frac{\pi_j}{1-\pi_j}\sum_{s\not=j}\frac{\pi_sP(s,s)}{1-P(s,s)}\,.
\]
An analogous lower bound could also be derived for the moment generating function of $T^{(j)}$ based on Corollary \ref{cor:ord}(ii).}
\end{remark}

Defining the average hitting time $\overline{w}=\sum_{j\in S}\pi_j\mathbb{E}W_j$, we note that Corollary \ref{cor:ord}(i) gives the bound
\[
\overline{w}\leq\sum_{j\in S}\sum_{l\not=j}\pi_l\mathbb{E}T_{(l)}\leq(|S|-1)t^*\,,
\]
where $t^*=\sup_{l\in S}\mathbb{E}T_{(l)}$ is the worst-case expected strong stationary time.  As a consequence of this, we note that $(|S|-1)t^*/n$ serves as an upper bound on the total variation distance between the ergodic average of the Markov chain $X$ up to time $n$ and the stationary distribution $\pi$; see Corollary 3 of Roberts and Rosenthal \cite{rr97}.  

We conclude this section by noting that in some cases it is possible to identify the state which achieves the worst case expected strong stationary time, as in the following.
\begin{proposition}
Let $X$ be a reversible Markov chain with transition matrix $P$, and let the state $m\in S$ be such that $\pi_m\geq\pi_l$ for all $l\in S$.  Let $T_{(l)}$ be a fastest strong stationary time for $X$ with $X_0\sim\delta_l$.  If the minimum element of $P^t$ occurs in the column corresponding to transitions into state $m$ for each $t\geq1$, then $T_{(m)}\geq_{st}T_{(l)}$ for each $l\in S$.
\end{proposition}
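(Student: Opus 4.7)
The plan is to reduce the stochastic ordering $T_{(m)}\geq_{st}T_{(l)}$ to a pointwise comparison of separations. By Lemma \ref{lem:sep}, since $T_{(l)}$ is a fastest strong stationary time, $\mathbb{P}(T_{(l)}>t)=s_l(t)$ with
\[
s_l(t)=1-\inf_{i\in S}\frac{P^t(l,i)}{\pi_i}\,.
\]
Thus it suffices to show that $s_m(t)\geq s_l(t)$ for every $t\geq1$ and every $l\in S$, and the boundary case $t=0$ is trivial since $s_l(0)=1-\pi_{\min}/\pi_l$ is already maximised at the state with the largest stationary mass.

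The key move is to use reversibility to convert a row-minimum into a column-minimum. The detailed-balance equations $\pi_lP^t(l,i)=\pi_iP^t(i,l)$ yield
\[
\frac{P^t(l,i)}{\pi_i}=\frac{P^t(i,l)}{\pi_l}\,,
\]
so that
\[
\inf_{i\in S}\frac{P^t(l,i)}{\pi_i}=\frac{1}{\pi_l}\min_{i\in S}P^t(i,l)\,.
\]
In other words, the separation starting from $l$ is governed by the minimum entry of the $l$-th \emph{column} of $P^t$.

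Next I would feed in the two hypotheses. The column-minimum assumption says that
\[
\min_{i\in S}P^t(i,m)=\min_{i,k\in S}P^t(i,k)\leq\min_{i\in S}P^t(i,l)\qquad\text{for every }l\in S\,.
\]
Combined with $\pi_m\geq\pi_l$, i.e.\ $\pi_m^{-1}\leq\pi_l^{-1}$, multiplying the two inequalities gives
\[
\frac{1}{\pi_m}\min_{i\in S}P^t(i,m)\leq\frac{1}{\pi_l}\min_{i\in S}P^t(i,l)\,,
\]
which is exactly $s_m(t)\geq s_l(t)$, proving the desired tail inequality and hence $T_{(m)}\geq_{st}T_{(l)}$.

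There is no serious obstacle here once one notices the reversibility rewrite: the whole argument hinges on recognising that the quantity $\inf_i P^t(l,i)/\pi_i$ defining separation is, under reversibility, a ratio $\pi_l^{-1}\min_i P^t(i,l)$ in which the two hypotheses of the proposition line up to pull in the same direction. The only small care needed is to keep straight which index is being minimised over when invoking the column-minimum assumption, since the statement is phrased in terms of "the column corresponding to transitions into state $m$," i.e.\ fixing the second argument of $P^t$ at $m$ and varying the first.
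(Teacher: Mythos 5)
Your argument is correct and is essentially the paper's own proof: both reduce the stochastic ordering to a pointwise comparison of separations via Lemma~\ref{lem:sep}, use reversibility to turn $\inf_i P^t(l,i)/\pi_i$ into $\pi_l^{-1}\inf_i P^t(i,l)$, and then combine the column-minimum hypothesis with $\pi_m\geq\pi_l$. One small slip worth correcting: your claim that $s_l(0)=1-\pi_{\min}/\pi_l$ is not right; with $X_0\sim\delta_l$ and $|S|\geq 2$ one has $\inf_s\delta_{l,s}/\pi_s=0$, so $s_l(0)=1$ for every $l$, and the $t=0$ case is trivial for that (even simpler) reason.
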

\begin{proof}
Since $X$ is reversible, we have
\[
\frac{P^t(m,k)}{\pi_k}=\frac{P^t(k,m)}{\pi_m}\leq\frac{P^t(k,m)}{\pi_l}\,,
\]
for each $k,l\in S$ and $t\geq1$.  Also, by assumption, $\inf_kP^t(k,m)\leq\inf_kP^t(k,l)$ for any $l\in S$.  Using Lemma \ref{lem:sep}, applying these two inequalities, and using reversibility of $X$, we have
\begin{multline*}
\mathbb{P}(T_{(m)}>t)=1-\inf_k\left(\frac{P^t(m,k)}{\pi_k}\right)\geq1-\inf_k\left(\frac{P^t(k,m)}{\pi_l}\right)\geq1-\inf_k\left(\frac{P^t(k,l)}{\pi_l}\right)\\
=1-\inf_k\left(\frac{P^t(l,k)}{\pi_k}\right)=\mathbb{P}(T_{(l)}>t)\,,
\end{multline*}
for any $l\in S$.
\end{proof}  

\section{Greedy construction of $T^{(j)}$} \label{sec:greedy}

In Section \ref{sec:approx}, we have considered the approximation of the hitting time $W_j$ by a geometric sum $U$ whose compounding distribution is that of $T^{(j)}$, a strong stationary time for $X$ with $X_0\sim\pi^{(j)}$.  We have already given some bounds which may be useful in the approximation of $T^{(j)}$; we now comment on the explicit construction of this random variable using the greedy approach of Section 3.4 of \cite{df90}.  Although this approach offers no guarantee of constructing the fastest strong stationary time, which is the one which minimizes the upper bound of Theorem \ref{thm:TVbd}, Diaconis and Fill \cite{df90} note that strong stationary times constructed in this way do often achieve this in the examples they consider.  We note this construction here since the choice $X_0\sim\pi^{(j)}$ gives the constructed $T^{(j)}$ a particularly simple structure, which may lead to explicit evaluation of the distribution of this choice of $T^{(j)}$.

Following the work of \cite{df90}, we construct $T^{(j)}$ as the first passage time of a dual Markov chain $X^*$ with state space $S^*=\{A\subseteq S:A\not=\emptyset\}$ to the state $S\in S^*$.  We specify the Markov chain $X^*$ through its initial distribution $\nu^{(j)}$ and transition matrix $P^*$.  These are related to $\pi^{(j)}$ and $P$ through an intertwining matrix $\Lambda$:
\[
\pi^{(j)}=\nu^{(j)}\Lambda\,,\qquad\mbox{and}\qquad\Lambda P=P^*\Lambda\,.
\]  
Letting $S_j=S\setminus\{j\}\in S^*$, the greedy construction of \cite{df90} gives that $X^*$ is initialised in state $S_j$ with probability 1, that is, $\nu^{(j)}=\delta_{S_j}$.  Hence, the row of $\Lambda$ corresponding to $S_j$ contains the vector $\pi^{(j)}$, and we may place the vector $\pi$ along every other row of $\Lambda$.  With this choice, it is straightforward to check that for any $A\in S^*$ with $A\not=S_j$, the row of $\Lambda P$ corresponding to $A$ also consists of the vector $\pi$, and hence the row of the transition matrix $P^*$ corresponding to $A$ consists of a 1 in the column corresponding to $S$, and 0s elsewhere.  It remains only to calculate the entries $P^*(S_j,A)$ for $A\in S^*$, i.e., the row of $P^*$ corresponding to $S_j$.

Using the above construction of $\Lambda$, we calculate that, for each $l\in S$,
\[
(\Lambda P)(S_j,l)=\frac{1}{1-\pi_j}\left[\pi_l-\pi_jP(j,l)\right]\,.
\]
This gives us the following construction of $P^*(S_j,A)$, for $A\in S^*$: Begin by defining $A_0=S$ and $Q_0(S_j,l)=\frac{1}{1-\pi_j}[\pi_l-\pi_jP(j,l)]$.  Then, having defined $A_{r-1}$ and $Q_{r-1}$ (for $r\geq1$), let 
\[
c_r=\min\left\{\frac{Q_{r-1}(S_j,l)}{\pi_l}: l\in A_{r-1}, Q_{r-1}(S_j,l)>0\right\}\,.
\]
If the set over which this minimum is taken is empty, stop and set $z=r-1$.  Otherwise, define
\[
A_r=\left\{l\in A_{r-1}: \frac{Q_{r-1}(S_j,l)}{\pi_l}\geq c_r\right\}\not=\emptyset
\]
and $Q_r(S_j,\cdot)=Q_{r-1}(S_j,\cdot)-c_r\pi_\cdot$ on $A_r$, and continue this recursive procedure.  On termination of this procedure, we have a sequence of elements of $S^*$, $A_z\subseteq A_{z-1}\subseteq\cdots\subseteq A_1$. We define $P^*(S_j,A_r)=c_r\sum_{l\in A_r}\pi_l$ for each $A_r$, and $P^*(S_j,A)=0$ for any $A\in S^*$ which is not equal to one of the $A_r$.

This construction gives a particularly simple Markov chain $X^*$: We start deterministically in the state $S_j$, and from every state apart from $S_j$ we may jump only to the absorbing state $S$.  Considering the first transition made by $X^*$ allows us to calculate $\mathbb{E}T^{(j)}$ explicitly, since it is clear that
\[
\mathbb{E}T^{(j)}=1+\mathbb{P}(X_1^*\not\in\{S,S_j\})+\mathbb{P}(X_1^*=S_j)\mathbb{E}T^{(j)}\,.
\] 
We hence obtain the following.
\begin{proposition}
For $T^{(j)}$ constructed as above, 
\[
\mathbb{E}T^{(j)}=1+\frac{1-P^*(S_j,S)}{1-P^*(S_j,S_j)}\,.
\]
\end{proposition}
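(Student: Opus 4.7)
The plan is to carry out a first-step analysis of the dual chain $X^*$. By the construction preceding the statement, $X^*_0 = S_j$ deterministically, and from any state $A \in S^*$ other than $S_j$ and $S$ the chain moves to the absorbing state $S$ with probability one on the next step. Consequently the first passage time $T^{(j)}$ takes one of only three forms, according to the value of $X^*_1$: namely $T^{(j)} = 1$ on $\{X^*_1 = S\}$; $T^{(j)} = 2$ on $\{X^*_1 \notin \{S, S_j\}\}$ (since the chain then lands in $S$ at time $2$); and, on $\{X^*_1 = S_j\}$, an appeal to the (strong) Markov property at time $1$ writes $T^{(j)} \stackrel{d}{=} 1 + T'$ for an independent copy $T'$ of $T^{(j)}$.

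First I would assemble these three cases to compute the expectation by conditioning on $X^*_1$, obtaining
\[
\mathbb{E}T^{(j)} = P^*(S_j,S) + 2\bigl(1 - P^*(S_j,S) - P^*(S_j,S_j)\bigr) + P^*(S_j,S_j)\bigl(1 + \mathbb{E}T^{(j)}\bigr),
\]
which is precisely the recurrence displayed just before the statement. Then I would solve this linear equation for $\mathbb{E}T^{(j)}$, yielding numerator $2 - P^*(S_j,S) - P^*(S_j,S_j)$ over denominator $1 - P^*(S_j,S_j)$. Splitting the numerator as $\bigl(1 - P^*(S_j,S_j)\bigr) + \bigl(1 - P^*(S_j,S)\bigr)$ peels off the leading $1$ and delivers the claimed formula.

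There is no substantive obstacle here: the real content has already been done in the structural description of $P^*$ provided by the greedy construction, and what remains is a short algebraic rearrangement of a scalar linear equation. The only mild point requiring care is the invocation of the Markov property on the event $\{X^*_1 = S_j\}$, ensuring that the expected residual time to hit $S$ from $S_j$ at time $1$ is again $\mathbb{E}T^{(j)}$; this is immediate from time-homogeneity of $X^*$ and the fact that $\{X^*_1 = S_j\} \in \mathcal{F}_1$.
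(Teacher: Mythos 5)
Your proposal is correct and follows the paper's approach exactly: you derive the same first-step recurrence $\mathbb{E}T^{(j)}=1+\mathbb{P}(X_1^*\notin\{S,S_j\})+\mathbb{P}(X_1^*=S_j)\mathbb{E}T^{(j)}$ displayed just before the proposition, using the structural fact that every non-$S_j$ state of $X^*$ jumps to $S$ in one step, and then solve the resulting linear equation.
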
  
\begin{example}
\emph{Suppose, for simplicity, that
\begin{equation} \label{eq:greedy}
\pi_l\not=\pi_jP(j,l)
\end{equation}
for each $l\in S$, and that the minimum $\min_{l\in S}\left(\frac{\pi_l-\pi_jP(j,l)}{\pi_l}\right)$ is uniquely achieved at $l=j$ (i.e., the choice $l=j$ achieves this minimum, and there is no other state $k\in S$ to do so).  This final assumption is along the lines of that made in Theorem \ref{thm:GeomSum}, though is considerably weaker (since we look only at the one-step transition probabilities here).}

\emph{Following the greedy construction outlined above, under these assumptions we have that $c_1=\frac{1-P(j,j)}{1-\pi_j}$ and $A_1=S$.  Then
\[
c_2=\min\left\{\frac{\pi_l-\pi_jP(j,l)}{\pi_l(1-\pi_j)}-\frac{1-P(j,j)}{1-\pi_j}: l\in S_j\right\}\,,
\]  
and $A_2=S_j$.  The procedure continues, but note that we have now determined the entries $P^*(S_j,S)$ and $P^*(S_j,S_j)$ of the transition matrix $P^*$ which (thanks to the simple structure of the Markov chain $X^*$) are all that are required for calculations.  Letting $\alpha=\frac{1-P(j,j)}{1-\pi_j}$ and $\beta=(1-\pi_j)c_2$, we may then use the underlying structure of $X^*$ to calculate that $\mathbb{P}(T^{(j)}=1)=\alpha$ and $\mathbb{P}(T^{(j)}=t)=(1-\alpha)(1-\beta)\beta^{t-2}$ for $t\geq2$.}
\end{example}  
\begin{example}
\emph{Suppose, as in the previous example, that (\ref{eq:greedy}) holds for each $l\in S$, but now assume that there is some $l^*\not=j$ such that $\min_{l\in S}\left(\frac{\pi_l-\pi_jP(j,l)}{\pi_l}\right)=\frac{\pi_{l^*}-\pi_jP(j,l^*)}{\pi_{l^*}}$.  As above we have $A_1=S$, but now we know that $l^*\not\in A_2$ and so $S_j\not\subseteq A_2$.  Since the sets $A_r$ are decreasing in $r$, we thus have that $S_j\not=A_r$ for any $r$, and so $P^*(S_j,S_j)=0$.  Letting $\gamma=P^*(S_j,S)$, we therefore have that $\mathbb{P}(T^{(j)}=1)=\gamma=1-\mathbb{P}(T^{(j)}=2)$.}
\end{example}

\vspace{12pt}\noindent{\bf Acknowledgements:} The authors thanks Jim Fill for insightful discussions which began this work, and the organisers of Stochastic Models VI in June 2018, where those discussions took place.  Thanks are also due to Robert Gaunt and Xiong Jin for the observations in Remark \ref{rem:bound}.   

\end{document}